\begin{document}

\newtheorem{thm}{Theorem}[section]
\newtheorem{lem}[thm]{Lemma}
\newtheorem{con}[thm]{Conjecture}
\newtheorem{cons}[thm]{Construction}
\newtheorem{prop}[thm]{Proposition}
\newtheorem{cor}[thm]{Corollary}
\newtheorem{claim}[thm]{Claim}
\newtheorem{obs}[thm]{Observation}
\newtheorem{que}[thm]{Question}
\newtheorem{defn}[thm]{Definition}
\newtheorem{example}[thm]{Example}
\newcommand{\di}{\displaystyle}
\def\dfc{\mathrm{def}}
\def\cF{{\cal F}}
\def\cH{{\cal H}}
\def\cK{{\cal K}}
\def\cM{{\cal M}}
\def\cA{{\cal A}}
\def\cB{{\cal B}}
\def\cG{{\cal G}}
\def\cP{{\cal P}}
\def\cC{{\cal C}}
\def\ap{\alpha'}
\def\Frk{F_k^{2r+1}}
\def\nul{\varnothing} 
\def\st{\colon\,}  
\def\MAP#1#2#3{#1\colon\,#2\to#3}
\def\VEC#1#2#3{#1_{#2},\ldots,#1_{#3}}
\def\VECOP#1#2#3#4{#1_{#2}#4\cdots #4 #1_{#3}}
\def\SE#1#2#3{\sum_{#1=#2}^{#3}}  \def\SGE#1#2{\sum_{#1\ge#2}}
\def\PE#1#2#3{\prod_{#1=#2}^{#3}} \def\PGE#1#2{\prod_{#1\ge#2}}
\def\UE#1#2#3{\bigcup_{#1=#2}^{#3}}
\def\FR#1#2{\frac{#1}{#2}}
\def\FL#1{\left\lfloor{#1}\right\rfloor} 
\def\CL#1{\left\lceil{#1}\right\rceil}

\title{The second largest eigenvalue and vertex-connectivity of regular multigraphs}
\author{
  Suil O\thanks{
  Applied Algebra and Optimization Research Center,
  Sungkyunkwan University,
  Suwon, Gyeonggi-do~~16419,
  {\tt suilo@skku.edu}}
}

\maketitle

\begin{abstract}
Let $\mu_2(G)$ be the second smallest Laplacian eigenvalue of a graph $G$.
The vertex-connectivity of $G$, written $\kappa(G)$, is the minimum size of a vertex set $S$ such that $G-S$ is
disconnected.  Fiedler proved that $\mu_2(G) \le \kappa(G)$ for a non-complete
simple graph $G$; for this reason $\mu_2(G)$ is called the ``algebraic
connectivity'' of $G$. His result can be extended to multigraphs: 
for any multigraph $G$ whose underlying graph is not a complete graph, we have $\mu_2(G) \le \kappa(G) m(G)$,
where for a pair of vertices $u$ and $v$, let $m(u,v)$ be the number of edges with endpoints $u$
and $v$ and $m(G)=\max_{(u,v) \in E(G)} m(u,v)$.

%For a vertex $v$, let $m(v)=\max_{u \in N(v)} m(v,u)$, where $N(v)$ is the set of neighbors of $v$, and let $m(G)=\max_{v \in V(G)} m(v)$.  
%We prove that for any multigraph $G$ whose underlying graph is not a complete graph,
%except when $G$ is a multigraph obtained from a complete graph by duplicating edges, 

Let $\lambda_2(G)$ be the second largest eigenvalue of a graph $G$.
We also prove that for any $d$-regular multigraph $G$ whose underlying graph is not the complete graph with 2 vertices,
%except for the 2-vertex $d$-regular multigraph, 
if $\lambda_2(G) < \frac 34d$, then $G$ is 2-connected.
For $t\ge2$ and infinitely many $d$, we construct $d$-regular multigraphs $H$
with $\lambda_2(H)=0$ (or $\mu_2(H)=d$), $\kappa(H)=t$, and $m(H)=\frac dt$.  These graphs show that
the inequality $\mu_2(G) \le \kappa(G) m(G)$ is sharp.  
%Note that if $G$ is a $d$-regular multigraph whose underlying graph is not a complete graph,
%, except for a graph obtained from a complete graph by duplicating edges, then $\mu_2(G) \le d$.  
Furthermore, the existence of them %the graphs in the construction %and the note 
implies that
there is no upper bound for $\lambda_2(G)$ in a $d$-regular multigraph $G$ to guarantee a certain vertex-connectivity greater than equal to 3.
\end{abstract}

\smallskip

{\bf MSC}: Primary, 05C50; secondary, 05C40

{\bf Key words}: Second largest eigenvalue, algebraic connectivity, vertex-connectivity, edge-connectivity, multigraphs

\section {Preliminaries}
A {\it simple} graph is a graph without loops or multiple edges, and a {\it multigraph} is a graph that may have multiple edges but does not contain loops. 
%In this paper, we treat both a simple graph and a multigraph.
The {\it adjacency matrix} $A(G)$ of $G$ is the $n$-by-$n$ matrix in which the entry $a_{i,j}$ is the number of edges in $G$ with endpoints $\{v_i, v_j\}$,
where $V(G)=\{v_1, ... , v_n\}$. The {\it eigenvalues} of $G$ are the eigenvalues of its adjacency matrix $A(G)$.
Let $\lambda_i(G)$ be the $i$-th largest eigenvalue of $G$.
The {\it Laplacian matrix} of $G$ is $D(G)-A(G)$, where $D(G)$ is the diagonal matrix of vertex degrees.
Let $\mu_i(G)$ be the $i$-th smallest Laplacian eigenvalue of $G$. Note that $\mu_1(G)=0$ for any graph $G$ . Also if $G$ is $d$-regular, then $\lambda_1(G)=d$, and $\lambda_i(G)=d-\mu_i(G)$, thus $\lambda_2(G)$ and $\mu_2(G)$ are directly related.

A multigraph $G$
is {\it $k$-vertex-connected} if $G$ has more than $k$ vertices and every subgraph obtained by deleting fewer than $k$ vertices is connected;
the {\it vertex-connectivity} of $G$, written $\kappa(G)$, is the maximum $k$ such that $G$ is $k$-vertex-connected.
%By Fiedler's result~\cite{F}, stating that  \begin{equation}\label{fiineq} \kappa(G) \ge \mu_2(G)  \end{equation} for a non-complete simple graph $G$, we call $\mu_2(G)$ the algebraic connectivity of $G$.
 Fiedler~\cite{F} proved that 
\begin{equation}\label{fiineq} 
 \mu_2(G) \le \kappa(G)
 \end{equation}
for a non-complete simple graph $G$; for this reason $\mu_2(G)$ is called the ``algebraic connectivity'' of $G$. A lot of research in graph theory over the last 40 years was stimulated by Fiedler's work. 
We can also easily extend his work to multigraphs with the same idea of his proof for inequality~(\ref{fiineq}). 
%In this paper, we provide the inequality for multigraphs.
For a pair of vertices $u$ and $v$, let $m(u,v)$ be the number of edges between $u$ and $v$. 
%For a vertex $v$, let $m(v)=\max_{u \in N(v)} m(v,u)$, where $N(v)$ is the set of neighbors of $v$, and let $m(G)=\max_{v \in V(G)} m(v)$.
Let $m(G)=\max_{(u,v) \in E(G)} m(u,v)$.
We call $m(G)$ the multiplicity of the multigraph $G$.
%In Section 2, we provide an upper bound for $\mu_2(G)$ in a multigraph $G$ in terms of $\kappa(G)$ and $m(G)$. 
Theorem~\ref{fiedlermultiversion} is an extension of Fiedler's result for multigraphs.
\begin{thm}\label{fiedlermultiversion}
For any multigraph $G$ whose underlying graph is not a complete graph, we have 
\begin{equation}\label{ineq1}
\mu_2(G) \le \kappa(G)m(G).
\end{equation}
\end{thm}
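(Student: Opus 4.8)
The plan is to adapt Fiedler's test-vector argument, paying the multiplicity factor only at the edge-counting step. The starting point is the variational characterization of the second smallest Laplacian eigenvalue. Since the Laplacian $L=D(G)-A(G)$ of a multigraph is symmetric, positive semidefinite, and has the all-ones vector $\mathbf{1}$ in its kernel, the Courant--Fischer theorem gives
\[
\mu_2(G)=\min_{\substack{x\neq 0\\ x\perp\mathbf{1}}}\frac{x^\top L x}{x^\top x},
\qquad
x^\top L x=\sum_{u<v} m(u,v)\,(x_u-x_v)^2,
\]
where the quadratic-form identity is obtained by reading $A(G)$ as the integer adjacency matrix, so that each unordered pair $\{u,v\}$ contributes $m(u,v)$ copies of $(x_u-x_v)^2$. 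Thus any single choice of $x$ orthogonal to $\mathbf{1}$ furnishes an upper bound on $\mu_2(G)$.

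Next I would exploit the hypothesis that the underlying graph is not complete: this guarantees a minimum vertex cut $S$ with $|S|=\kappa(G)$ such that $G-S$ is disconnected, and I may group its components into two nonempty vertex sets $A$ and $B$ with no edge between them. I would then take the test vector $x$ defined by $x_u=|B|$ for $u\in A$, $x_u=-|A|$ for $u\in B$, and $x_u=0$ for $u\in S$. This $x$ is orthogonal to $\mathbf{1}$ since $|A|\,|B|+|B|(-|A|)=0$, and a direct computation gives $x^\top x=|A|\,|B|\,(|A|+|B|)$. Because edges internal to $A$, to $B$, or to $S$ contribute zero and no edge joins $A$ to $B$, only the $A$--$S$ and $B$--$S$ edges survive, so
\[
x^\top L x=|B|^2\,e(A,S)+|A|^2\,e(B,S),
\]
where $e(\cdot,\cdot)$ counts edges with multiplicity.

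The crucial and only genuinely multigraph-specific step is the edge count. Each vertex of $A$ is joined to at most $|S|=\kappa(G)$ vertices of $S$, and each such pair carries at most $m(G)$ edges, whence $e(A,S)\le m(G)\,\kappa(G)\,|A|$, and symmetrically $e(B,S)\le m(G)\,\kappa(G)\,|B|$. Substituting,
\[
x^\top L x\le m(G)\,\kappa(G)\bigl(|B|^2|A|+|A|^2|B|\bigr)=m(G)\,\kappa(G)\,|A|\,|B|\,(|A|+|B|)=m(G)\,\kappa(G)\,x^\top x,
\]
and dividing by $x^\top x$ yields the claim $\mu_2(G)\le\kappa(G)\,m(G)$. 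I expect the main obstacle to be purely a matter of bookkeeping rather than genuine difficulty: one must verify that the factor $m(G)$ enters exactly once, through the bound $e(A,S)\le m(G)\,\kappa(G)\,|A|$, and confirm that the Courant--Fischer characterization together with the form $x^\top L x=\sum_{u<v}m(u,v)(x_u-x_v)^2$ transfers verbatim from the simple-graph setting once $A(G)$ is interpreted as the integer adjacency matrix. Everything else is identical to Fiedler's original argument.
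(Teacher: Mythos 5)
Your proof is correct, but it takes a genuinely different route from the paper's. The paper follows Fiedler's original scheme (modifying the proof of 3.3 in his paper): it establishes a vertex-deletion perturbation bound $\mu_2(G)\le\mu_2(G-v)+m(G)$ --- by comparing $G$ with the graph $G'$ obtained from $G-v$ and a new vertex joined by $m(G)$ parallel edges to every other vertex, for which $(x\;0)^{\top}$ lifts an eigenvector of $L(G-v)$ to eigenvalue $\mu_2(G-v)+m(G)$ --- and then iterates this deletion over the $\kappa(G)$ vertices of a minimum cut, ending at a disconnected graph with $\mu_2=0$. You instead prove the bound in one shot via Courant--Fischer with the explicit test vector taking values $|B|$ on $A$, $-|A|$ on $B$, and $0$ on a minimum cut $S$; all your steps check out: orthogonality to $\mathbf{1}$, the identity $x^{\top}Lx=|B|^2\,e(A,S)+|A|^2\,e(B,S)$, and the count $e(A,S)\le m(G)\,\kappa(G)\,|A|$, which is indeed the single place the multiplicity enters (non-completeness of the underlying graph correctly guarantees that a cut of size $\kappa(G)\le n-2$ exists, and the degenerate case $\kappa(G)=0$ is absorbed automatically since then $x^{\top}Lx=0$). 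What each approach buys: yours is self-contained, avoids the two auxiliary facts the paper leans on (edge-monotonicity of $\mu_2$ and the eigenvector computation for $G'$), and makes the equality analysis transparent --- in the sharp examples $H_{d,t}$ of Section 3, with $A=\{x\}$, $B=\{y\}$, $S=V(C_{d,t})$, your test vector is an actual eigenvector, so the bound is visibly attained; the paper's route, by contrast, yields the intermediate inequality $\mu_2(G)\le\mu_2(G-v)+m(G)$, which is of independent interest. One small attribution point: your closing remark that everything else is ``identical to Fiedler's original argument'' is slightly off, since Fiedler's own proof is the deletion-iteration argument the paper adapts, whereas the test-vector proof is the standard variational alternative.
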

Note that Theorem~\ref{fiedlermultiversion} implies Fielder's inequality~(\ref{fiineq}) by taking $m(G)=1$ (when $G$ is simple). 
In Section~\ref{consec}, we construct graphs showing the tightness of the bound in inequality~(\ref{ineq1}).
We can prove Theorem~\ref{fiedlermultiversion} by slightly modifying the proof of 3.3 in~\cite{F}.
Here is a rough proof. For a multigraph $G$, let $G_1$ be a multigraph obtained from $G$ by removing a vertex. 
Consider a new graph $G'$ obtained from $G_1$ and a single vertex $v$ by adding $m(G)$ edges between $v$ and each vertex in $V(G')$.
Then $L(G')$ has an eigenvector $(x~ 0 )^{T}$ corresponding to an eigenvalue $\mu_2(G_1)+m(G)$, 
where $x$ is an eigenvector of $L(G_1)$ corresponding to $\mu_2(G_1)$.
Thus $\mu_2(G') \le \mu_2(G_1)+m(G)$, which gives the desired result.

A multigraph $G$ is {\it $t$-edge-connected} if every subgraph obtained by deleting fewer than $t$ edges is connected;
the {\it edge-connectivity} of $G$, written $\kappa'(G)$, is the maximum $t$ such that $G$ is $t$-edge-connected. %Note that $\kappa'(G) \ge \kappa(G)$. 
Since $\kappa'(G) \ge \kappa(G)$ for any multigraph $G$,
we have the following.

\begin{cor}
For any multigraph $G$ whose underlying graph is not a complete graph, we have 
\begin{equation}\label{kappa'ineq}
\mu_2(G) \le \kappa'(G)m(G).
\end{equation}

\end{cor}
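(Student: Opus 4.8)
The plan is to obtain this corollary as an immediate consequence of Theorem~\ref{fiedlermultiversion}, by inserting the classical (Whitney-type) comparison between vertex- and edge-connectivity. Since the hypothesis on $G$ is identical to that of the theorem (the underlying graph is not complete), the theorem applies verbatim and supplies the starting inequality $\mu_2(G) \le \kappa(G)m(G)$; the only additional ingredient needed is that inflating $\kappa(G)$ to $\kappa'(G)$ can only enlarge the right-hand side.

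Concretely, I would proceed in three steps. First, invoke Theorem~\ref{fiedlermultiversion} to get $\mu_2(G) \le \kappa(G)m(G)$. Second, record the inequality $\kappa(G) \le \kappa'(G)$, which holds for every multigraph: given a minimum edge cut separating $G$ into two nonempty parts, selecting one endpoint from each cut edge produces a vertex set of size at most $\kappa'(G)$ whose deletion disconnects $G$ (parallel edges share endpoints, so this selection can only use fewer vertices, which is harmless). Hence $\kappa(G) \le \kappa'(G)$. Third, since $m(G)\ge 1>0$, multiplication by $m(G)$ is order-preserving, so $\kappa(G)m(G) \le \kappa'(G)m(G)$. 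Chaining the two bounds yields
\begin{equation*}
\mu_2(G) \le \kappa(G)m(G) \le \kappa'(G)m(G),
\end{equation*}
which is exactly inequality~(\ref{kappa'ineq}).

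There is essentially no genuine obstacle here, as the result is a one-line deduction from the theorem. The only two points warranting a moment's care are the validity of $\kappa(G)\le\kappa'(G)$ in the multigraph setting (already flagged in the text preceding the statement, and justified above by the endpoint-selection argument, noting that multiplicities do not break the bound) and the positivity of $m(G)$, which guarantees that scaling the connectivity inequality by $m(G)$ preserves its direction. Both are routine, so the proof reduces to citing Theorem~\ref{fiedlermultiversion} together with the monotonicity just described.
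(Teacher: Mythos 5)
Your proposal is correct and takes exactly the paper's route: the corollary is deduced in one line from Theorem~\ref{fiedlermultiversion} together with $\kappa(G)\le\kappa'(G)$ and $m(G)\ge 1$, which is precisely how the paper presents it (the paper simply cites $\kappa'(G)\ge\kappa(G)$ without proof). One small caveat: your sketched justification of $\kappa(G)\le\kappa'(G)$ --- deleting one endpoint of each edge of a minimum edge cut --- is the classically incomplete argument, since the chosen endpoints may swallow an entire side of the cut (e.g.\ when one side is a single vertex and you pick its neighbors), leaving a connected remainder; this step should instead be cited as Whitney's inequality, which extends to multigraphs because parallel edges leave $\kappa$ unchanged and can only increase $\kappa'$.
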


We could not find any multigraph satisfying equality in inequality~(\ref{kappa'ineq}) when $m(G) \ge 2$. However,
there exist infinitely many multigraphs whose algebraic connectivity is bigger than their edge-connectivity. Of course, some of them are multigraphs whose underlying graphs are not complete graphs. For example, consider the graph $F$ obtained from the cycle on 4 vertices by duplicating $\frac {d-1}2$ times on two incident edges
and $\frac{d+1}{2}$ times on the other two incident edges.
Then $\mu_2(F)=\frac{3d}{2}- \frac{\sqrt{d^2+8}}{2}$ and  $\kappa'(F)=d-1$, which means $\mu_2(F) > \kappa'(F)$.
We believe that there is no big gap between $\mu_2$ and $\kappa'$ unlike between $\mu_2$ and $\kappa$.

\begin{que}
Is there a constant $c$ such that $\mu_2(G) \le \kappa'(G) + c$ for any multigraph $G$?
\end{que}

%If there exists such a constant, then it would be an additional fact that $\mu_2$ is more closely related to $\kappa'$ than $\kappa$. %Thus $\mu_2$ should be called algebraic edge-connectivity instead of algebraic connectivity.

%Recently, many researchers proved upper bounds for $\lambda_2(G)$ (or $\mu_2(G)$) in a $d$-regular simple graph $G$ to guarantee a certain edge-connectivity (See~\cite{C1},~\cite{KS},~\cite{C2}). 
Fiedler's results have been improved and refined by various authors (See~\cite{C1},~\cite{C2},~\cite{D},~\cite{KMNS},~\cite{KS}).
The author~(\cite{O2}) extended some of these results to multigraphs. 
A {\it $d$-regular multigraph} $G$ is a multigraph such that
for any vertex $v$ in $V(G)$, $\sum_{u \in V(G)}m(v,u)=d$, which is the number of edges incident to $v$.

\begin{thm}\label{O1}
If $G$ is a $d$-regular multigraph with $\lambda_2(G) < \frac{d-1+\sqrt{9d^2-10d+17}}4$, then $\kappa'(G) \ge 2$.
\end{thm}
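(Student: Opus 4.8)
The plan is to prove the contrapositive: if $\kappa'(G)\le 1$ then $\lambda_2(G)\ge\theta$, where I abbreviate $\theta=\frac{d-1+\sqrt{9d^2-10d+17}}{4}$. A direct computation shows $\theta$ is the larger root of $2x^2-(d-1)x-(d^2-d+2)$, so $2\theta^2=(d-1)\theta+(d^2-d+2)$, and one checks $\theta<d$ for $d\ge 2$ (we may assume $d\ge2$, the case $d=1$ being degenerate). If $G$ is disconnected, then $d$ is an eigenvalue of at least two components, so $\lambda_2(G)=d>\theta$ and there is nothing to prove. Hence assume $G$ is connected and contains a bridge $uv$; deleting it yields components $G_1\ni u$ and $G_2\ni v$ with $n_i=|V(G_i)|$. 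Since every vertex of $G_i$ has degree $d$ except the bridge-endpoint, which has internal degree $d-1$, a short edge count rules out $n_i\le 2$, so $n_i\ge 3$.

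I would then partition $V(G)$ into the four cells $\{u\}$, $V(G_1)\setminus\{u\}$, $\{v\}$, $V(G_2)\setminus\{v\}$ and pass to the corresponding quotient matrix. The crucial point is that, because a single common value is assigned to each ``remainder'' cell and the number of internal edges there is forced by $d$-regularity, the quotient does not depend on the unknown internal structure of the $G_i$; it is
\[
B=\begin{pmatrix} 0 & d-1 & 1 & 0\\ a_1 & d-a_1 & 0 & 0\\ 1 & 0 & 0 & d-1\\ 0 & 0 & a_2 & d-a_2\end{pmatrix},\qquad a_i=\frac{d-1}{n_i-1}\in\Bigl(0,\tfrac{d-1}{2}\Bigr],
\]
where $a_i=\frac{d-1}{2}$ exactly when $n_i=3$. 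Eigenvalue interlacing for quotient matrices gives $\lambda_2(G)\ge\lambda_2(B)$, so it remains to bound $\lambda_2(B)$ from below by $\theta$.

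Because $G$ is $d$-regular, the all-ones vector is a Perron eigenvector of $B$ for the eigenvalue $d$ (and $B$ is irreducible with constant row sums $d$ for $a_i>0$), so $\det(xI-B)=(x-d)\,c(x)$ with $c$ monic, cubic, and having all three roots below $d$; thus $\lambda_2(B)$ is the largest root of $c$. Since $c(x)\to+\infty$, it suffices to prove $c(\theta)\le 0$, as this forces a root of $c$ in $[\theta,\infty)$. The decisive simplification is that $a_1$ and $a_2$ each occur in only one row of $B$, so $\det(xI-B)$—and hence $c(\theta)$ after reducing powers of $\theta$ via $2\theta^2=(d-1)\theta+(d^2-d+2)$—is bilinear in $(a_1,a_2)$; a bilinear function on the box $[0,\frac{d-1}{2}]^2$ attains its maximum at a corner, so it is enough to check the four corner values. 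At $(0,0)$ the spectrum of $B$ is $\{d,d,1,-1\}$, giving $c(\theta)=(\theta-d)(\theta^2-1)<0$; at $(\frac{d-1}{2},\frac{d-1}{2})$ one gets exactly $0$; and the two mixed corners are $\le 0$. Hence $c(\theta)\le 0$ throughout and $\lambda_2(B)\ge\theta$, completing the contrapositive.

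The main obstacle is precisely this last sign verification: $B$ is not symmetric, so Weyl-type monotonicity of $\lambda_2(B)$ in the $a_i$ is not automatic, and the honest route is the explicit corner arithmetic for $c(\theta)$, where the reductions using the defining relation of $\theta$ must be carried out with care. Conceptually the corners encode the right picture—enlarging a side ($a_i\to 0$, $n_i\to\infty$) pushes $G$ toward two disjoint regular graphs and $\lambda_2(B)\to d$, while the minimum value $\theta$ is realized at the smallest admissible sides $n_1=n_2=3$. For odd $d$ this corner is an actual graph, namely two triangles joined by a single bridge in which the bridge-endpoint of each triangle is joined to the other two vertices by $\frac{d-1}{2}$ edges and those two vertices are joined by $\frac{d+1}{2}$ edges; its antisymmetric eigenvector attains $\lambda_2=\theta$, so the bound cannot be improved.
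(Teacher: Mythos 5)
Your proof is correct, but note first that this paper never proves Theorem~1.4 at all: it is imported from the author's earlier paper [O2] (\emph{Linear Algebra Appl.} 491 (2016)), so there is no in-paper argument to compare line by line. The closest in-paper relative is the proof of Theorem~1.7 in Section~2, which handles the vertex-connectivity analogue by the same quotient-matrix/interlacing technique you use, but with $2$- and $3$-cell partitions and a lengthy case analysis on $(n_1,n_2)$. Your route replaces that case analysis with a structurally cleaner device: a $4$-cell partition $\{u\},\,V(G_1)\setminus\{u\},\,\{v\},\,V(G_2)\setminus\{v\}$ whose quotient depends on $G$ only through $a_i=\frac{d-1}{n_i-1}$, followed by the observation that $\det(\theta I-B)$ is multilinear in the rows and hence bilinear in $(a_1,a_2)$, so the sign check reduces to four corners of the box $[0,\frac{d-1}{2}]^2$. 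I verified the assertions you left implicit: every vertex count works ($n_i\ge 3$ by your degree argument; the row sums of $B$ are $d$; the cell-to-cell averages are exactly as stated), and the corner arithmetic closes. At $(0,0)$ the spectrum is $\{d,d,1,-1\}$ and $c(\theta)=(\theta-d)(\theta^2-1)<0$ since $1<\theta<d$ for $d\ge 2$; at both mixed corners a direct reduction via $2\theta^2=(d-1)\theta+(d^2-d+2)$ gives $c(\theta)=-\frac{(d-1)^2}{2}<0$; and at $\bigl(\frac{d-1}{2},\frac{d-1}{2}\bigr)$ the symmetry-adapted decomposition yields the antisymmetric block $\begin{pmatrix} -1 & d-1 \\ \frac{d-1}{2} & \frac{d+1}{2}\end{pmatrix}$, whose characteristic polynomial is $\frac{1}{2}\bigl(2x^2-(d-1)x-(d^2-d+2)\bigr)$ --- exactly the defining quadratic of $\theta$ --- so $c(\theta)=0$ there, which also recovers the sharpness example and matches the paper's $\theta(d,1)$ in its extremal characterization. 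Two minor remarks: the corner $a_i=0$ corresponds to no actual graph (it is the $n_i\to\infty$ limit), but extending the bilinear bound to the closed box is legitimate; and the handshake count $d\,n_i-1$ even forces $d$ odd whenever a bridge exists, so for even $d$ your connected case is vacuous --- worth a sentence, though nothing in your argument breaks without it.
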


\begin{thm}\label{O2}
For $t \ge 2$, if $G$ is a $d$-regular multigraph with $\lambda_2(G) < d-t$,
then $\kappa'(G) \ge t+1$. Furthermore, if $t$ is odd and $G$ is a $d$-regular multigraph with $\lambda_2(G)<d-t+1$, then $\kappa'(G) \ge t+1$.
\end{thm}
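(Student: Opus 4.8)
The plan is to argue by contradiction using the variational (Courant--Fischer) description of the second largest eigenvalue. Since the all-ones vector $\mathbf{1}$ is the eigenvector of the $d$-regular graph $G$ for $\lambda_1(G)=d$, we have $\lambda_2(G)=\max\{x^{T}A(G)x/x^{T}x : x\neq 0,\ x\perp \mathbf{1}\}$, so that any single test vector orthogonal to $\mathbf{1}$ furnishes a lower bound on $\lambda_2(G)$. Assume for contradiction that $\kappa'(G)\le t$; we may restrict to $t\le d-1$, the range in which the conclusion can hold since $\kappa'(G)\le d$ always. A minimum edge cut then splits $V(G)$ into $S$ and $\bar S=V(G)\setminus S$ with $c:=e(S,\bar S)=\kappa'(G)\le t<d$. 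Because isolating a single vertex of a $d$-regular graph costs exactly $d$ edges, the bound $c<d$ forces $n_1:=|S|\ge 2$ and $n_2:=|\bar S|\ge 2$.

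For the test vector I take $x$ equal to $n_2$ on $S$ and $-n_1$ on $\bar S$, which satisfies $x\perp\mathbf 1$. Using $2e(S)=dn_1-c$ and $2e(\bar S)=dn_2-c$, a direct computation with $n=n_1+n_2$ gives $x^{T}x=n_1n_2n$ and $x^{T}A(G)x=dn_1n_2n-cn^{2}$, so that
\[
\lambda_2(G)\ \ge\ d-\frac{cn}{n_1n_2},
\]
which is exactly the interlacing bound $\lambda_2(G)\ge\lambda_2(B)$ for the $2\times 2$ quotient matrix $B$ of the partition $(S,\bar S)$. Now $(n_1-1)(n_2-1)\ge 1$ gives $n_1n_2\ge n$, hence $cn/(n_1n_2)\le c\le t$ and therefore $\lambda_2(G)\ge d-t$, contradicting the hypothesis $\lambda_2(G)<d-t$. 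This proves the first assertion.

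For the refinement when $t$ is odd, I would keep the same cut and the same inequality $\lambda_2(G)\ge d-cn/(n_1n_2)$, but exploit a parity constraint: summing degrees over $S$ gives $dn_1=2e(S)+c$, so $c\equiv dn_1\pmod 2$, and likewise $c\equiv dn_2\pmod 2$. If $c\le t-1$, then $cn/(n_1n_2)\le c\le t-1$ and we already obtain $\lambda_2(G)\ge d-t+1$. The only remaining possibility is $c=t$ with $t$ odd; then $dn_1$ and $dn_2$ are both odd, forcing $d$, $n_1$, and $n_2$ all odd, so in fact $n_1,n_2\ge 3$. In that regime $2n_1n_2\ge 3(n_1+n_2)=3n$, whence $cn/(n_1n_2)=tn/(n_1n_2)\le \tfrac{2t}{3}\le t-1$ because $t\ge 3$; again $\lambda_2(G)\ge d-t+1$, contradicting the hypothesis.

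The eigenvalue estimate itself is standard Fiedler-type reasoning, and the computations are routine; the part that requires the real idea is the odd-$t$ case, where neither the parity argument alone (it fails when $d$ is odd) nor the crude size bound $n_1n_2\ge n$ alone suffices. The key is to combine them: parity forces any extremal cut of odd size $t$ to have both sides of odd order, and oddness together with $n_i\ge 2$ upgrades this to $n_i\ge 3$, which is precisely the margin needed to sharpen $n_1n_2\ge n$ to $n_1n_2\ge \tfrac32 n$ and close the gap. I expect the main bookkeeping obstacle to be verifying that these two mechanisms together exhaust all cases and correctly handling the degenerate ranges ($t\ge d$ or a singleton side $n_i=1$), rather than any single hard inequality.
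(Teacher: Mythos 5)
Your argument is correct, but note that the paper does not actually prove this statement: Theorem~1.4 is imported from the author's earlier work \cite{O2}, so there is no internal proof to compare line-by-line. What can be compared is method, and yours is the same technology the paper deploys elsewhere: your test vector (equal to $n_2$ on $S$ and $-n_1$ on $\overline{S}$) reproduces exactly the interlacing bound $\lambda_2(G)\ge d-cn/(n_1n_2)$ coming from the $2\times 2$ quotient matrix of the partition $(S,\overline{S})$, which is the same device (Theorem~2.1 in the paper) used to prove Theorem~1.5. Your computations check out: $2e(S)=dn_1-c$ and $2e(\overline{S})=dn_2-c$ give $x^TAx=dn_1n_2n-cn^2$ and $x^Tx=n_1n_2n$; the bound $c<d$ does force $n_1,n_2\ge 2$, hence $n_1n_2\ge n$, and the first assertion follows. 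The odd-$t$ refinement is also sound, and you correctly identified the one genuine idea needed: the degree-sum identity $dn_1=2e(S)+c$ yields $c\equiv dn_1\pmod 2$, so an odd minimum cut $c=t$ is impossible for even $d$ and, for odd $d$, forces $n_1,n_2$ odd, upgrading $n_i\ge 2$ to $n_i\ge 3$; then $2n_1n_2\ge 3n$ gives $tn/(n_1n_2)\le \frac{2t}{3}\le t-1$ for $t\ge 3$, closing the case. This parity-plus-size combination is precisely the mechanism behind the $d-t$ versus $d-t+1$ dichotomy in the stated theorem.

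One point deserves a more honest sentence than your aside that one ``may restrict to $t\le d-1$.'' For $t\ge d$ the hypothesis is \emph{not} vacuous: for example $K_{d+1}$ has $\lambda_2=-1<d-t$ when $t=d$, yet $\kappa'(K_{d+1})=d<t+1$, and the two-vertex multigraph with $d$ parallel edges gives counterexamples for $d\le t\le 2d-1$. So the theorem as literally stated fails in that range; this is exactly the caveat the paper itself flags (``complete graphs are the exceptions although the author did not mention in the theorems''). Your restriction to $t\le d-1$ is therefore the right move, but you should state explicitly that you are invoking that known exception, not that the excluded range is harmless by vacuity.
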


Note that $\lambda_2(G) < \frac{d-1+\sqrt{9d^2-10d+17}}4$ in Theorem~\ref{O1} and $\lambda_2(G) < d-t$ in Theorem~\ref{O2} can be replaced by $\mu_2(G) > \frac{3d+1-\sqrt{9d^2-10d+17}}{4}$ and $\mu_2(G) > t$, respectively, since $\mu_2(G)=d-\lambda_2(G)$.
Also, in Theorem~\ref{O1} and~\ref{O2}, complete graphs are  the exceptions although the author did not mention in the theorems.

The author also characterized when a $d$-regular multigraph $G$ has $\lambda_2(G) = \theta(d,t)$
and $\kappa'(G)=t$ for every positive integer $t \in [d-1]$,
where 
$$\theta(d,t)=
\begin{cases}
\frac{d-1+\sqrt{9d^2-10d+17}}4 \qquad \text{ if } t=1 \\
d-t \qquad\qquad\qquad ~~\text{ if } t \text{ is even} \\
d-t+1 \qquad \qquad~~~\text{ if } t \text{ is odd}.
\end{cases}
$$

%Cheeger inequality~(\cite{HLW}) for $d$-regular multigraphs states that\begin{equation}\label{eq2}\frac{d-\lambda_2(G)}{2} \le h_G \le \sqrt{2d(d-\lambda_2(G))},\end{equation}where $h_G= \min_{S \subseteq V \text{ and } 1 \le |S| \le n/2} \frac{m[S,\overline{S}]}{|S|}$ and $m[S,\overline{S}]$ is the number of edges with one endpoint each in sets $S$ and $\overline{S}$.Inequality~(\ref{eq2}) informally says that if the gap between $\lambda_1(G)$ and $\lambda_2(G)$ (or $\mu_2(G)$) is higher, then the edge-connectivity may be larger. In fact, Theorem~\ref{O1} and~\ref{O2} say that as $\lambda_2$ is small, $\kappa'(G)$ is large.  

Many researchers investigated relationships between $\lambda_2$ (or $\mu_2$) and $\kappa'$ in a $d$-regular simple graph or a $d$-regular multigraph as seen above. In this paper, we study a relationship between $\lambda_2$  (or $\mu_2$) and $\kappa$ in a $d$-regular multigraph. 
%This paper is the first one to contribute to such a relationship.  
In Section 2, we provide an upper bound for $\lambda_2(G)$ (or a lower bound for $\mu_2(G)$) in a $d$-regular multigraph $G$ to guarantee $\kappa(G) \ge 2$.

\begin{thm}\label{main}
If $G$ is a $d$-regular multigraph with $\lambda_2(G) < \frac34d$, except for the 2-vertex $d$-regular multigraph, then $\kappa(G)\ge 2$.
\end{thm}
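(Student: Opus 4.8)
The plan is to argue by contraposition: assuming $\kappa(G)\le 1$ while $G$ is not the $2$-vertex multigraph, I will show $\lambda_2(G)\ge\frac34 d$. If $G$ is disconnected, then each component is $d$-regular, so $d$ is an eigenvalue of each component's adjacency matrix; hence $d$ is an eigenvalue of $A(G)$ of multiplicity at least $2$, giving $\lambda_2(G)=d\ge\frac34 d$. Thus the substantive case is that $G$ is connected with a cut vertex $v$ (and at least $3$ vertices, since a connected $2$-vertex graph has no cut vertex).

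First I would record a structural fact: since $G$ is connected and $G-v$ has at least two components, no component of $G-v$ can be a single vertex. Indeed, a singleton component $\{u\}$ would send all $d$ of its edges to $v$ (there are no loops), exhausting $v$'s degree and leaving $v$ with no edges to the other components, contradicting connectivity unless $V(G)=\{u,v\}$ — the excluded case. So I may group the components of $G-v$ into two nonempty parts $V_1,V_2$ with $a:=|V_1|\ge 2$ and $b:=|V_2|\ge 2$, where $f_1$ (resp.\ $f_2$) edges join $v$ to $V_1$ (resp.\ $V_2$), $f_1+f_2=d$, and no edges run between $V_1$ and $V_2$.

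The key tool is eigenvalue interlacing for the quotient matrix of the partition $\{V_1,\{v\},V_2\}$. Writing $\hat B$ for the $3\times 3$ matrix of average block row-sums,
\[
\hat B=\begin{pmatrix} d-\frac{f_1}{a} & \frac{f_1}{a} & 0\\ f_1 & 0 & f_2\\ 0 & \frac{f_2}{b} & d-\frac{f_2}{b}\end{pmatrix},
\]
the standard interlacing theorem for quotient matrices (Haemers) gives $\lambda_2(G)\ge\mu_2(\hat B)$, the second largest eigenvalue of $\hat B$. Since $G$ is $d$-regular, $(1,1,1)^T$ is an eigenvector of $\hat B$ with eigenvalue $d$; the remaining eigenvalues $\mu_2\ge\mu_3$ satisfy $\mu_2+\mu_3=d-\frac{f_1}{a}-\frac{f_2}{b}$ and $\mu_2\mu_3=\frac{f_1 f_2-a f_2^2-b f_1^2}{ab}<0$ (the numerator is negative because $a f_2^2+b f_1^2\ge 2(f_1^2+f_2^2)\ge 4f_1f_2>f_1f_2$). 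So $\mu_2>0>\mu_3$, and it remains to prove $\mu_2\ge\frac34 d$.

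Because $\mu_2+\mu_3\le d<\frac{3d}{2}$, the inequality $\mu_2\ge\frac34 d$ is equivalent — after isolating the square root in $\mu_2=\tfrac12\big((\mu_2+\mu_3)+\sqrt{(\mu_2+\mu_3)^2-4\mu_2\mu_3}\big)$ and squaring, which is legitimate since $\frac{3d}{2}-(\mu_2+\mu_3)>0$ — to a polynomial inequality. Substituting $d=f_1+f_2$ and clearing denominators, it reduces to showing that the quadratic form
\[
\tfrac{b(4+3a)}{4}\,f_1^2+\tfrac{3ab-6a-6b-8}{2}\,f_1 f_2+\tfrac{a(4+3b)}{4}\,f_2^2\ge 0
\]
for all $f_1,f_2\ge 1$. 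I expect this to be the main obstacle, since the cross coefficient is negative for small $a,b$, so the form is not manifestly nonnegative; I would instead check positive semidefiniteness through its discriminant, which is the single inequality $(3ab-6a-6b-8)^2\le ab(4+3a)(4+3b)$, i.e.\ $48a^2b+48ab^2-8ab-36a^2-36b^2-96a-96b-64\ge 0$ for integers $a,b\ge 2$. A short monotonicity argument (the left side is increasing in each of $a,b$ on this range) pins the minimum at the corner $a=b=2$, where it equals $0$ and the form becomes $5(f_1-f_2)^2$. This completes the proof and simultaneously identifies the extremal configuration $a=b=2,\ f_1=f_2=\tfrac d2$, which attains the sharp value $\tfrac34 d$.
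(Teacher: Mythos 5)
Your proof is correct, and I verified the computations independently: for the quotient matrix of $\{V_1,\{v\},V_2\}$ the non-Perron eigenvalues do satisfy $\mu_2+\mu_3=d-\frac{f_1}{a}-\frac{f_2}{b}$ and $\mu_2\mu_3=\frac{f_1f_2-af_2^2-bf_1^2}{ab}$; since $\frac{3d}{2}-(\mu_2+\mu_3)>0$ the squaring step is a genuine equivalence; substituting $d=f_1+f_2$ and clearing $ab$ gives exactly the coefficients $\frac{b(4+3a)}{4}$, $\frac{3ab-6a-6b-8}{2}$, $\frac{a(4+3b)}{4}$; the discriminant condition expands to $48a^2b+48ab^2-8ab-36a^2-36b^2-96a-96b-64\ge0$, whose partial derivative in $a$ is $24a(4b-3)+48b^2-8b-96>0$ for $a,b\ge2$, so the minimum is $0$ at $(2,2)$, where the form is $5(f_1-f_2)^2$. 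However, your route differs meaningfully from the paper's. The paper uses the same three-part quotient matrix but also a second, two-part quotient $\{V(G_1),V(G_2)\cup\{v\}\}$, and then runs a case analysis over the sizes $(n_1,n_2)$ — roughly seven subcases, each handled by plugging extreme values of $n_1,n_2,m_1$ into one of the two interlacing bounds. You collapse all of this into a single quadratic-form inequality in $(f_1,f_2)$ with parameters $(a,b)$, verified once by positive semidefiniteness plus monotonicity; this is more uniform, yields the equality analysis ($a=b=2$, $f_1=f_2=\frac d2$, which is precisely the paper's sharp example $H_{d,1}$) as a byproduct rather than by inspection, and — a genuine improvement — makes explicit the structural fact that no component of $G-v$ can be a singleton, which the paper never argues (its case analysis silently begins at $n_1,n_2\ge2$, even though it only asserts $n_1,n_2\ge1$). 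What the paper's approach buys is that each subcase is a one-line numeric estimate requiring no symbolic manipulation; what yours buys is a complete, checkable argument with no cases. Two small polish points: rename your $\mu_2(\hat B)$, since $\mu_i$ denotes Laplacian eigenvalues throughout the paper; and state explicitly that when the cross coefficient $\frac{3ab-6a-6b-8}{2}$ is nonnegative the form is trivially nonnegative on $f_1,f_2\ge1$, while your discriminant check covers the negative case (in fact it covers both, since it gives nonnegativity over all of $\mathbb{R}^2$).
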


In Section~\ref{consec}, for any fixed positive integer $t\ge 2$ and infinitely many $d$, we construct $d$-regular multigraphs $H_{d,t}$ such that $\kappa(H_{d,t})=t$ and $\lambda_2(H_{d,t})=0$ (or $\mu_2(H_{d,t})=d$).
This means that although $\mu_2$ is fixed like $d$, the vertex-connectivity of a $d$-regular graph $G$ might vary.

\begin{thm}{\rm (Interlacing theorem) (See \cite{BH},\cite{GR}, Lemma 1.6 in~\cite{O3})}\label{interlacing}
If $A$ is a real symmetric $n \times n$ matrix and $B$ is a principal submatrix of $A$ with order $m \times m$, then for each $i \in [m]$, the $i$-th largest eigenvalue of $A$ is at least the $i$-th largest eigenvalue of $B$.
\end{thm}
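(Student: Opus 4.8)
The plan is to derive the stated (one-sided) eigenvalue inequality directly from the Courant--Fischer max--min characterization of the eigenvalues of a real symmetric matrix. Throughout, for a real symmetric $k \times k$ matrix $M$ let $\lambda_i(M)$ denote its $i$-th largest eigenvalue, so that Courant--Fischer reads
\[
\lambda_i(M) = \max_{\substack{U \subseteq \mathbb{R}^k \\ \dim U = i}} \; \min_{\substack{x \in U \\ x \neq 0}} \frac{x^{T} M x}{x^{T} x}.
\]
The whole proof consists of producing a single admissible test subspace for $A$ out of the optimal test subspace for $B$, and then comparing Rayleigh quotients.

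First I would reduce to a convenient normal form. Since $B$ is a principal submatrix of $A$, there is an index set $S \subseteq [n]$ with $|S| = m$ such that $B$ is obtained from $A$ by keeping exactly the rows and columns indexed by $S$. Conjugating $A$ by a permutation matrix $P$, i.e.\ passing to $P^{T} A P$, leaves its eigenvalues unchanged and can be chosen to send $S$ to $\{1, \ldots, m\}$; hence I may assume without loss of generality that $B$ is the leading $m \times m$ block of $A$.

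The heart of the argument is then an embedding of test subspaces. For $y \in \mathbb{R}^m$ let $\tilde{y} = (y^{T}, 0, \ldots, 0)^{T} \in \mathbb{R}^n$ be obtained by appending $n - m$ zeros. Because $B$ is the leading block of $A$, the two Rayleigh quotients coincide: $\tilde{y}^{T} A \tilde{y} = y^{T} B y$ and $\tilde{y}^{T} \tilde{y} = y^{T} y$. Fix $i \in [m]$ and let $U^{*} \subseteq \mathbb{R}^m$ be an $i$-dimensional subspace attaining the outer maximum in the Courant--Fischer formula for $\lambda_i(B)$. The map $y \mapsto \tilde{y}$ is linear and injective, so its image $\widetilde{U^{*}} \subseteq \mathbb{R}^n$ is again $i$-dimensional and is therefore admissible in the formula for $\lambda_i(A)$. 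Using $\widetilde{U^{*}}$ as one particular competitor for the outer maximum defining $\lambda_i(A)$, and transporting the inner minimum back to $U^{*}$ via the identity of Rayleigh quotients, I obtain
\[
\lambda_i(A) \;\ge\; \min_{\substack{x \in \widetilde{U^{*}} \\ x \neq 0}} \frac{x^{T} A x}{x^{T} x} \;=\; \min_{\substack{y \in U^{*} \\ y \neq 0}} \frac{y^{T} B y}{y^{T} y} \;=\; \lambda_i(B),
\]
which is exactly the claimed bound.

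There is no genuine obstacle in this argument: it is a clean application of a standard tool. The only point demanding care is that it uses only the max--min half of Courant--Fischer, which is precisely what produces the one-sided inequality $\lambda_i(A) \ge \lambda_i(B)$ stated in the theorem (the complementary min--max half would produce the matching lower interlacing bound $\lambda_i(B) \ge \lambda_{i+n-m}(A)$, which is not needed here). Verifying that zero-padding simultaneously preserves both quadratic forms and the dimension of the test subspace is the one routine check that makes the comparison legitimate.
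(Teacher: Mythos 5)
Your proof is correct: the permutation reduction, the zero-padding embedding of test subspaces, and the max--min half of Courant--Fischer together give exactly the claimed one-sided inequality, and your closing remark correctly identifies why only one half of Courant--Fischer is needed. The paper itself states this theorem without proof, deferring to \cite{BH}, \cite{GR}, and Lemma~1.6 of \cite{O3}, and the argument given in those references is the same standard Courant--Fischer argument you use, so your approach matches the intended one.
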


By using the Interlacing Theorem, we observe that for any multigraph $G$, the $\alpha(G)$-th largest eigenvalue of $G$ is non-negative,
where $\alpha(G)$ is the maximum size of a vertex set $S$ of a multigraph $G$ such that no pair of two vertices in $S$ is adjacent.
This observation says that  $\lambda_2(G) \ge 0$ (or $\mu_2(G) \le d$) when $G$ is a $d$-regular multigraph
whose underlying graph is not a complete graph. Thus the existence of $H_{d,t}$ and the observation imply that there is no upper bound for $\lambda_2(G)$ (or lower bound for $\mu_2(G)$) in a $d$-regular multigraph $G$ to guarantee a certain vertex-connectivity greater than equal to 3.

For undefined terms, see West~\cite{W} or Godsil and Royle~\cite{GR}.

\section{How does $\kappa(G)=1$ affect $\lambda_2(G)$?}

In this section, we prove an upper bound for $\lambda_2(G)$ in a $d$-regular multigraph $G$ to guarantee $\kappa(G) \ge 2$.
To prove it, we use the quotient matrix method. Consider a partition $V(G) = V_1 \cup \cdots \cup V_s$ of the vertex set of a graph $G$ into $s$ non-empty subsets. For $1 \le  i, j \le s$, let $b_{i,j}$ denote the average number of incident edges in $V_j$ of the vertices in $V_i$. 
The {\it quotient matrix} of this partition is the $s \times s$ matrix whose $(i, j)$-th entry equals $b_{i,j}$.

\begin{thm}\label{quot} {\rm (See Corollary 2.5.4 in~\cite{BH}, Lemma 9.6.1 in~\cite{GR})}
The eigenvalues of the quotient matrix interlace
the eigenvalues of $G$. 
\end{thm}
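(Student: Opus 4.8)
The plan is to realize the quotient matrix---which is in general \emph{not} symmetric---as a symmetric compression $S^{T}A(G)S$ of the adjacency matrix by an $n\times s$ matrix $S$ with orthonormal columns, and then to deduce the interlacing from the principal-submatrix version already recorded in Theorem~\ref{interlacing}. Throughout, write $A=A(G)$, let $n_i=|V_i|$ for the parts $V_1,\ldots,V_s$ of the partition, and set $n=|V(G)|$.

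First I would encode the partition by its characteristic matrix. Let $P$ be the $n\times s$ matrix whose $(v,i)$ entry is $1$ if $v\in V_i$ and $0$ otherwise, and put $\Lambda=\mathrm{diag}(n_1,\ldots,n_s)$. A direct count gives $P^{T}P=\Lambda$ and $(P^{T}AP)_{ij}=\sum_{u\in V_i}\sum_{w\in V_j}a_{u,w}$, which is the total number of edges (with multiplicity) from $V_i$ to $V_j$. Dividing the $i$-th row by $n_i$ turns these totals into the averages $b_{ij}$, so the quotient matrix is exactly $B=\Lambda^{-1}P^{T}AP$.

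Next I would symmetrize. Since $B$ need not be symmetric, I would conjugate by $\Lambda^{1/2}$ and set $S=P\Lambda^{-1/2}$, whose columns are the normalized indicator vectors of the parts. Then $S^{T}S=\Lambda^{-1/2}P^{T}P\Lambda^{-1/2}=I_s$, so $S$ has orthonormal columns, and $\tilde B:=S^{T}AS=\Lambda^{-1/2}P^{T}AP\Lambda^{-1/2}$ is real symmetric. Moreover $\tilde B=\Lambda^{1/2}B\Lambda^{-1/2}$, so $\tilde B$ and $B$ are similar; in particular $B$ has real eigenvalues, and they coincide with those of $\tilde B$. Thus it suffices to show that the eigenvalues of $\tilde B$ interlace those of $A$.

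To this end I would complete $S$ to an $n\times n$ orthogonal matrix $Q=[\,S\mid S'\,]$, where the columns of $S'$ form an orthonormal basis of the orthogonal complement of the column space of $S$. Then $Q^{T}AQ$ is orthogonally similar to $A$ and hence has the same eigenvalues as $A$ (the eigenvalues of $G$), while its leading $s\times s$ principal submatrix is precisely $S^{T}AS=\tilde B$. Applying Theorem~\ref{interlacing} to the real symmetric matrix $Q^{T}AQ$ and this principal submatrix gives $\lambda_i(\tilde B)\le\lambda_i(A)$ for each $i\in[s]$; applying it once more to $-Q^{T}AQ$ with principal submatrix $-\tilde B$, and translating back through $\lambda_i(-M)=-\lambda_{k-i+1}(M)$ for a $k\times k$ matrix $M$, yields the complementary bound $\lambda_i(\tilde B)\ge\lambda_{i+n-s}(A)$. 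Together these give $\lambda_i(A)\ge\lambda_i(\tilde B)\ge\lambda_{i+n-s}(A)$, the asserted interlacing, and since the eigenvalues of $\tilde B$ are those of the quotient matrix $B$, we are done. The only genuine subtlety is the asymmetry of $B$ caused by the averaging factor $\Lambda^{-1}$: a theorem about symmetric matrices does not apply to $B$ directly, and the conjugation by $\Lambda^{1/2}$---which turns $B$ into the honest compression $S^{T}AS$ with $S$ orthonormal---is exactly what makes Theorem~\ref{interlacing} usable.
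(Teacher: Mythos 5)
Your proof is correct: writing the quotient matrix as $B=\Lambda^{-1}P^{T}AP$, conjugating to the symmetric compression $\tilde B=\Lambda^{1/2}B\Lambda^{-1/2}=S^{T}AS$ with $S=P\Lambda^{-1/2}$ having orthonormal columns, completing $S$ to an orthogonal matrix so that $\tilde B$ becomes a leading principal submatrix of a matrix orthogonally similar to $A$, and then applying Theorem~\ref{interlacing} to both $Q^{T}AQ$ and its negative is exactly the standard argument, with all details (including the two-sided interlacing $\lambda_i(A)\ge\lambda_i(\tilde B)\ge\lambda_{i+n-s}(A)$) handled correctly. The paper itself gives no proof but defers to Corollary 2.5.4 of~\cite{BH} and Lemma 9.6.1 of~\cite{GR}, whose proofs proceed by essentially this same symmetric-compression route, so your argument matches the intended one.
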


This partition is {\it equitable} if for each $1 \le  i, j \le s$, any vertex $v \in V_i$ has exactly $b_{i,j}$ neighbors in $V_j$. In this case, the eigenvalues of the quotient matrix are eigenvalues of $G$ and the spectral radius of the quotient matrix equals the spectral radius of $G$ (see \cite{BH, GR} for more details).

Theorem~\ref{quot} is the main tool to prove Theorem~\ref{main}. %The proof of Theorem~\ref{main} can be shorter to show $\mu_2(G)>\frac{d}{4}$, but we will have more cases to characterize when $\mu_2(G)=\frac{d}{4}$ and $\kappa(G)=1$.

\begin{proof}[Proof of Theorem~\ref{main}]
%It suffices to prove that if $\lambda_2(G) < \frac 34 d$, then $\kappa(G) \ge 2$.
Assume to the contrary that $\kappa(G) \le 1$.
If $G$ is disconnected, then $\lambda_2(G)=d \ge \frac{3d}4$.
Now, suppose that $\kappa(G)=1$.
Let $v$ be a vertex such that $G-v$ is disconnected.
Let $G_1$ be a component of $G-v$ such that $|N(v)\cap V(G_1)| \le \frac d2$. Let $G_2=G-v-V(G_1)$. Since $G$ is $d$-regular,
we have $|N(v)\cap V(G_2)| \ge \frac d2$.
Let $n_1=|V(G_1)|, n_2=|V(G_2)|, m_1=|V(G_1)\cap N(v)|,$ and
$m_2=|V(G_2)\cap N(v)|$. Note that both $n_1$ and  $n_2$ are positive since  $G$ is not the 2-vertex $d$-regular multigraph. Consider the vertex partition $\{V(G_1), V(G_2)\cup v\}$. Let $Q$ be the quotient matrix of the partition.
Then we have 

$$Q=\begin{bmatrix}
d - \frac{m_1}{n_1} & \frac{m_1}{n_1} \\
\frac{m_1}{n_2+1} &  d - \frac{m_1}{n_2+1}
\end{bmatrix},$$

\noindent
and its characteristic polynomial is $$\left(x-d+\frac{m_1}{n_1}\right)\left(x-d+\frac{m_1}{n_2+1}\right) - \frac{m_1}{n_1}\frac{m_1}{n_2+1}
=(x-d)\left(x-d+\frac{m_1}{n_1}+\frac{m_1}{n_2+1}\right).$$

Thus by Theorem~\ref{quot}, 
\begin{equation}\label{equ1}
\lambda_2(G) \ge \lambda_2(Q) = d-\frac{m_1}{n_1}-\frac{m_1}{n_2+1}.
\end{equation}

Consider the vertex partition $\{V(G_1), v, V(G_2)\}$. Let $Q'$ be the quotient matrix of the partition.
Then we have 

$$Q'=\begin{bmatrix}
d - \frac{m_1}{n_1} & \frac{m_1}{n_1} & 0\\
m_1 & 0 & m_2 \\
0 & \frac{m_2}{n_2} &  d - \frac{m_2}{n_2}
\end{bmatrix},$$

\noindent
and its characteristic polynomial is $$(x-d)\left[ (x-d)^2+\left(\frac{m_1}{n_1}+\frac{m_2}{n_2}+ m_1+m_2\right)(x-d) +
m_1m_2\left(\frac 1{n_1}+\frac 1{n_2} + \frac 1{n_1n_2} \right) \right].$$

Thus by Theorem~\ref{quot}, 
\begin{equation}\label{equ2}
\lambda_2(G) \ge \lambda_2(Q') = \frac{2d-\left(\frac{m_1}{n_1}+\frac{m_2}{n_2}+ d\right) + \sqrt{\left(\frac{m_1}{n_1}+\frac{m_2}{n_2}+ d\right)^2 - 4m_1m_2\left(\frac {n_1+n_2+1}{n_1n_2} \right)}}{2}.
\end{equation}

Now, we consider two cases to prove depending on the orders of $n_1$ and $n_2$.\\

\noindent
{\it Case 1: $n_1\ge n_2$.} \\

{\it Case 1.1: $n_1 \ge 4$ and $n_2 \ge 3$.}
Since $m_1 \le \frac d2$, by inequality~(\ref{equ1}),
$$\lambda_2(G) \ge d - \frac d{8} - \frac d{8} = \frac {3d}4.$$
By plugging the numbers $n_1=4, n_2=3,$ and $m_1=\frac d2$, into inequality~(\ref{equ2}), we have 
$$\lambda_2(G) > \frac {3d}4.$$

{\it Case 1.2: $n_1 = 3$ and $n_2 = 3$.} By plugging $n_1=3$ and $n_2=3$ into inequality~(\ref{equ2}), we have 
$$\lambda_2(G) \ge \frac{2d + 3 \sqrt{(\frac{4d}3)^2 - 4m_1m_2(\frac 79)}}{6} \ge \frac{5d}6> \frac{3d}4,$$
since $m_1m_2 \le \frac{d^2}{4}$.

{\it Case 1.3: $n_1 \ge 6$ and $n_2 = 2$.}
Since $m_1 \le \frac d2$, by inequality~(\ref{equ1}),
$$\lambda_2(G) \ge d - \frac d{12} - \frac d{6} = \frac {3d}4.$$
By plugging the numbers $n_1=6, n_2=2,$ and $m_1=\frac d2$, into inequality~(\ref{equ2}), we have $$\lambda_2(G) > \frac {3d}4.$$

~~~{\it Case 1.3.1: $3\le n_1\le 5$.}  By plugging $n_2=2$ into inequality~(\ref{equ2}), we have  $$\lambda_2(G) \ge \frac{d + \frac{(n_1-2)m_1}{2n_1} + 2\sqrt{[\frac{3d}2 -\frac{(n_1-2)m_1}{2n_1}]^2 - 4m_1m_2(\frac {n_1+3}{2n_1})}}{4} $$$$> \frac{d + 2\sqrt{\frac{9d^2}4 - \frac{3(n_1-2)m_1d}{2n_1} - (\frac {n_1+3}{2n_1})d^2}}{4} \ge \frac{d + 2\sqrt{\frac{9d^2}4 - \frac{3(n_1-2)d^2}{4n_1} - (\frac {n_1+3}{2n_1})d^2}}{4} = \frac{3d}4,$$ since $m_1 \le \frac d2$.

~~~{\it Case 1.3.2: $n_1=2$.} By plugging $n_1=2$ and $n_2=2$ into inequality~(\ref{equ2}), we have 
$$\lambda_2(G) \ge \frac{d + 2\sqrt{(\frac{3d}2)^2 - 4m_1m_2(\frac 54)}}{4} \ge \frac{3d}4.$$

In Case 1, equality holds only when $n_1=2$, $n_2=2$, and $m_1=m_2=\frac d2$.

\noindent
{\it Case 2: $n_1 < n_2$.}

{\it Case 2.1: $n_1 \ge 3$ and $n_2 \ge 5$.} Since $m_1 \le \frac d2$, by inequality~(\ref{equ1}),
$$\lambda_2(G) \ge d - \frac d{6} - \frac d{12} = \frac {3d}4.$$
By plugging the numbers $n_1=3, n_2=5,$ and $m_1=\frac d2$ into inequality~(\ref{equ2}), we have $$\lambda_2(G) > \frac{3d}4.$$

{\it Case 2.2: $n_1=3$ and $n_2=4$.}
By plugging $n_1=3$ and $n_2=4$ into inequality~(\ref{equ2}), 

$$\lambda_2(G) \ge  \frac{2d+\frac{m_2}{4} + 3 \sqrt{(\frac{m_1}{3}+\frac{m_2}4 + d)^2 - 4m_1m_2 (\frac 23)}}{6} > \frac{2d+\frac d{8}+3\sqrt{\frac{25d^2}{16}-\frac{2d^2}3}}6 \ge \frac {3d}4,$$
since $m_2 \ge \frac d2$ and $\frac{m_1}3 + \frac{m_2}4 + d =  \frac{m_1}{12} + \frac{5d}4$.

{\it Case 2.3: $n_1=2$ and $n_2\ge 3$.}
By plugging $n_1=2$ into inequality~(\ref{equ2}), 

$$\lambda_2(G) \ge  \frac{d+\frac{(n_2-2)m_2}{n_2} + 2 \sqrt{(\frac{m_1}{2}+\frac{m_2}{n_2} + d)^2 - 4m_1m_2 \left(\frac{3+n_2}{2n_2}\right)}}{4} $$
$$> \frac{d+\frac{(n_2-2)d}{n_2}+2\sqrt{\left(\frac{(n_2+1)d}{n_2}\right)^2-\frac{(3+n_2)d^2}{2n_2}}}4 \ge \frac{d+\frac{(n_2-2)d}{n_2}+2d\sqrt{\frac{n_2^2+n_2+2}{2n_2^2}}}4 > \frac {3d}4,$$
since $\frac{m_1}2 + \frac{m_2}{n_2} + d =  \frac{(n_2-2)m_1}{2n_2} + \frac{(n_2+1)d}{n_2}$ and 
$$1+\frac{n_2-2}{n_2}+2\sqrt{\frac{n_2^2+n_2+2}{2n_2^2}} = \frac{2n_2-2+\sqrt{2n_2^2+2n_2+4}}{n_2} > 3.$$
\end{proof}

%If $d=4k+2$ is for some positive integer $k$, then  we can slightly improve the bound $\frac {3d}4$ for $\lambda_2(G)$ to $\frac{d+2\sqrt{d^2+5}}4$ with more cases in the proof of Theorem~\ref{main1}. Of course, if $d$ is not even, then exactly one of $n_1$ and $n_2$ must be odd, meaning at least 3, and $m_1 \le \frac{d-1}2$ and $m_2 \ge \frac{d+1}2$ by the degree-sum formula. Thus we can have other upper bounds for $\lambda_2(G)$ with many more cases in the proof of Theorem~\ref{main1}. The bounds may be the largest roots of polynomials $p_1(x)$ and $p_3(x)$ depending on the remainder when $d$ is divided by 4, where $p_1(x)= - 64x^4- 16(d-1)x^3+ (36d^2  + 8d + 84)x^2+ (13d^3 - 15d^2  + 7d - 5)x + d^4 - 3d^3 - 25d^2 - 25d -12$ and $p_3(x)= -64x^4 - 16(d + 1)x^3  + (36d^2 - 8d + 84)x^2 + (13d^3 + 15d^2 + 7d + 5)x + d^4 + 3d^3 - 25d^2 + 25d -12.$However, we skip the parts because the proofs for them might be very similar with the one of Theorem~\ref{main1} with a lot more cases.
Since $\kappa(G)=\kappa'(G)$ for $3$-regular multigraph $G$, we can have a better bound for a 3-regular multigraph in Theorem 1.4 (see Theorem~\ref{O1}).

\section{Sharp Examples, and $\kappa(G) \ge 2$ may not affect $\lambda_2(G)$} \label{consec}
In this section, by presenting a $d$-regular multigraph $H_{d,1}$ with $\kappa(H_{d,1})=1$ and   $\lambda_2(H_{d,1})=\frac34d$, we show that the upper bound for $\lambda_2$ in Theorem~\ref{main} is sharp.
Furthermore,  for every positive integer $t \ge 2$, we construct $d$-regular multigraphs $H_{d,t}$ with $\kappa(H_{d,t})=t$, $m(H_{d,t})=\frac dt$, and  $\lambda_2(H_{d,t})=0$. These graphs show that inequality~(\ref{ineq1}) in Theorem~\ref{fiedlermultiversion} is sharp.
Furthermore, by the observation that if $G$ is a multigraph whose underlying graph is not a complete graph,
then  $\lambda_2(G) \ge 0$, the graphs $H_{d,t}$ guarantee that the bound cannot be improved although the vertex-connectivity of a $d$-regular multigraph varies.

%Observation~\ref{inde} guarantees that $\lambda_2(G) \ge 0$ for any multigraph $G$, except when $G$ is a multigraph obtained from a complete graph by duplicating edges.

\begin{obs} \label{obs}
For a positive integer $d \ge 3$,
there exists only one multigraph with three vertices such that every vertex has degree $d$, except only one vertex with degree $\frac d2$.
\end{obs}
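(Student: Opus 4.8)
The plan is to reduce the claim to solving a small system of linear equations in the edge multiplicities. First I would name the three vertices $u$, $v$, $w$, letting $w$ be the vertex of degree $\frac d2$ and $u,v$ the two vertices of degree $d$. Writing $a=m(u,v)$, $b=m(u,w)$, and $c=m(v,w)$ for the three possible edge multiplicities (there are no loops in a multigraph), the graph is completely determined by the triple $(a,b,c)$, so uniqueness amounts to showing this triple is forced by the degree conditions.

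Next I would record those degree conditions as the system
\[
a+b=d, \qquad a+c=d, \qquad b+c=\frac d2 .
\]
Subtracting the first equation from the second gives $b=c$, and substituting into the third yields $2b=\frac d2$, hence $b=c=\frac d4$ and $a=d-b=\frac{3d}4$. Since this linear system has a unique solution, the triple $(a,b,c)=\left(\frac{3d}4,\frac d4,\frac d4\right)$ is the only candidate. Because interchanging the roles of the two degree-$d$ vertices $u$ and $v$ leaves the triple unchanged, the labeling produces no distinct graphs, so the multigraph is unique up to isomorphism.

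Finally I would verify that this candidate genuinely is a multigraph, i.e.\ that the multiplicities are non-negative integers; this holds exactly when $4 \mid d$, so that $\frac d4$ and $\frac{3d}4$ are integers. I do not expect any real obstacle here, as everything follows from solving a $3\times 3$ linear system. The only points deserving a word of care are the implicit divisibility assumption on $d$ needed for the edge counts to be integers, and the remark that the two degree-$d$ vertices are interchangeable, so that the solution $(a,b,c)$ determines a single multigraph rather than several.
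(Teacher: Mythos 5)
Your proposal is correct and takes essentially the same approach as the paper: you introduce the three edge multiplicities $a$, $b$, $c$, write down the degree conditions as a linear system, and solve it to get the unique triple $\left(\frac{3d}{4},\frac{d}{4},\frac{d}{4}\right)$, exactly as in the paper's proof. Your explicit remark that $4 \mid d$ is needed for the multiplicities to be integers is a point the paper leaves implicit (its constructions use such $d$), and is a welcome clarification rather than a deviation.
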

\begin{proof}
Let $G$ be a multigraph with three vertices, say $v_1, v_2$, and $v_3$ such that $v_1$ and $v_2$ have degree $d$,
and $v_3$ has degree $\frac d2$. Let $a, b, $ and $c$ be the number of edges between $v_1$ and $v_2$,
between $v_2$ and $v_3$, and between $v_3$ and $v_1$, respectively.
Since $v_1$ and $v_2$ have degree $d$, and $v_3$ has degree $\frac d2$, we have $a+c=a+b=d$, $b+c=\frac d2$, and $2(a+b+c)=\frac{5d}2$. Thus, we have $a=\frac {3d}4$ and $b=c=\frac d4$,
which gives the desired result.
\end{proof}

\begin{figure}
\begin{center}
\includegraphics[height=3cm]{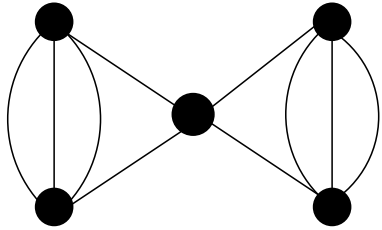}
\end{center}
\caption{The $4$-regular multigraph with $\lambda_2= 3$ and $\kappa=1$}
\end{figure}

\begin{figure}
\begin{center}
\includegraphics[height=5cm]{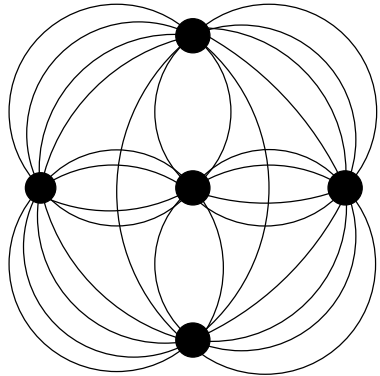}
\end{center}
\caption{The $12$-regular multigraph with $\lambda_2= 0$ and $\kappa=3$}
\end{figure}

\begin{cons}\label{construction}
{\rm Let $B_{d,1}$ be the 3-vertex multigraph guaranteed by Observation~\ref{obs}.
Let $H_{d,1}$ be the graph obtained from two copies of $B_{d,1}$ by identifying the vertices with degree $\frac d2$. 
Note that $H_{d,1}$ is the smallest $d$-regular multigraph %with 5 vertices and 
with $\kappa(H_{d,1})=1$.

Let $t \ge 2$ be fixed. Suppose that for some non-negative integer $k$, $d \ge 3$ is a positive integer such that $\frac{(t-2)d}{t(t-1)}=k$.  For any positive integer $a$, we can have infinitely many $d$ satisfying the equality by taking $d=at(t-1)$.
Let $C_{d,t}$ be the graph obtained from the complete graph $K_{t}$ by duplicating each edge $\frac{(t-2)d}{t(t-1)}$ times. Note that each vertex in $C_{d,t}$ has degree $\frac{(t-2)d}t$.
Let $H_{d,t}$ be the graph obtained from two vertices $x, y$ and one copy of $C_{d,t}$ by adding $\frac dt$ edges between
two $x$ and each vertex in $C_{d,t}$ and between $y$ and each vertex in $C_{d,t}$. Note that $H_{d,t}$ is a  $d$-regular multigraph with $\kappa(H_{d,t})=t$ and $m(H_{d,t})=\frac dt$.
}
\end{cons}

See Figure 1 for $d=4$ and $t=1$, and Figure 2 for $d=12$ and $t=3$.

\begin{lem}\label{main1}
$\lambda_2(H_{d,1})=\frac 34d$ and $\lambda_2(H_{d,t})=0$.
\end{lem}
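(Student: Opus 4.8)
The plan is to determine the \emph{entire} spectrum of each graph by exploiting its large automorphism group and then simply read off the second-largest eigenvalue. Both $H_{d,1}$ and $H_{d,t}$ carry highly symmetric equitable partitions, so I will split the eigenvectors into those that are constant on each cell (captured by a small quotient matrix, whose eigenvalues are genuine eigenvalues of the graph because the partition is equitable, as noted after Theorem~\ref{quot}) and those that are ``internal'' to a cell (vanishing on the remaining vertices). A dimension count will confirm that these two families exhaust all eigenvalues, after which identifying $\lambda_2$ is immediate.

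For $H_{d,1}$, whose vertices are $v_1,v_2,u_1,u_2$ (the four degree-$d$ vertices of the two copies of $B_{d,1}$) together with the shared central vertex $w$, I would write down eigenvectors dictated by the symmetries swapping $v_1\leftrightarrow v_2$, swapping $u_1\leftrightarrow u_2$, and swapping the two copies. Ordering the coordinates as $(v_1,v_2,u_1,u_2,w)$: the all-ones vector gives eigenvalue $d$; the vectors $(1,-1,0,0,0)$ and $(0,0,1,-1,0)$ each give eigenvalue $-\frac{3d}{4}$; the vector $(1,1,-1,-1,0)$ gives eigenvalue $\frac{3d}{4}$; and the vector $(1,1,1,1,-4)$ gives eigenvalue $-\frac{d}{4}$. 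These five vectors are mutually orthogonal, hence form a complete eigenbasis, so the spectrum is $\{\,d,\tfrac{3d}{4},-\tfrac{d}{4},-\tfrac{3d}{4},-\tfrac{3d}{4}\,\}$ and therefore $\lambda_2(H_{d,1})=\frac34 d$.

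For $H_{d,t}$, with the $t$ vertices $c_1,\dots,c_t$ of $C_{d,t}$ together with $x$ and $y$, I would use the equitable partition $\{x\},\{y\},\{c_1,\dots,c_t\}$, whose quotient matrix is
\[
Q=\begin{bmatrix} 0 & 0 & d \\ 0 & 0 & d \\ \frac{d}{t} & \frac{d}{t} & \frac{(t-2)d}{t}\end{bmatrix}.
\]
A short computation factors its characteristic polynomial as $-\lambda\bigl(\lambda^2-\frac{(t-2)d}{t}\lambda-\frac{2d^2}{t}\bigr)$, whose discriminant is the perfect square $\frac{d^2(t+2)^2}{t^2}$, giving quotient eigenvalues $d$, $0$, and $-\frac{2d}{t}$; since the partition is equitable, all three are eigenvalues of $H_{d,t}$. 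The remaining eigenvectors are those vanishing on $x,y$ and summing to zero on $C_{d,t}$: using $\sum_{j\ne i}v_{c_j}=-v_{c_i}$, any such vector is an eigenvector with eigenvalue $-\frac{(t-2)d}{t(t-1)}$, and this space has dimension $t-1$. Together these account for $3+(t-1)=t+2$ eigenvalues, i.e.\ all of them. Every eigenvalue other than the Perron value $d$ is at most $0$, and exactly one of them (the quotient eigenvalue $0$) equals $0$, so $\lambda_2(H_{d,t})=0$; this is consistent with the observation that $\lambda_2\ge0$ whenever the underlying graph is non-complete.

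The computations themselves are routine; the one point that needs care is the bookkeeping that guarantees completeness. I must verify that the cell-constant eigenvectors and the internal (zero-on-$x,y$, zero-sum) eigenvectors are linearly independent and together span $\mathbb{R}^{n}$, so that no larger eigenvalue has been overlooked. This follows because the cell-constant subspace and the internal subspace are orthogonal and of complementary dimension; the equitable-partition theorem then promotes the quotient eigenvalues to honest eigenvalues of the graph, and the dimension count closes the argument. Once completeness is in hand, reading off $\lambda_2$ is immediate in both cases.
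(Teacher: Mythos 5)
Your proof is correct, and it follows the paper's general strategy (equitable partitions, quotient matrices whose eigenvalues lift, internal eigenvectors, and a dimension count), but with two variations worth recording. For $H_{d,1}$ the paper simply writes down the $5\times 5$ adjacency matrix and factors its characteristic polynomial as $(x-d)(x-\frac34 d)(x+\frac d4)(x+\frac{3d}{4})^2$; your explicit orthogonal eigenbasis is an equivalent but more transparently checkable route to the same spectrum. For $H_{d,t}$ the difference is more substantive: the paper uses the \emph{two}-part partition $\{\{x,y\},V(C_{d,t})\}$, whose quotient $\bigl(\begin{smallmatrix}0 & d\\ \frac{2d}{t} & \frac{(t-2)d}{t}\end{smallmatrix}\bigr)$ yields only $d$ and $-\frac{2d}{t}$, and must then recover the remaining eigenvalues from the nontrivial eigenvalues of the disjoint union $2K_1\cup C_{d,t}$ --- and its write-up actually stumbles at exactly this point (it refers to ``the three roots of $p(x)$'' although $p$ is quadratic, and never explicitly exhibits the eigenvalue $0$ arising from the vector $e_x-e_y$, which is precisely the eigenvalue that forces $\lambda_2=0$ rather than something negative). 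Your three-part partition $\{x\},\{y\},V(C_{d,t})$ produces $d$, $0$, and $-\frac{2d}{t}$ directly from the quotient, so the crucial eigenvalue $0$ appears explicitly, the internal space cleanly contributes $-\frac{(t-2)d}{t(t-1)}$ with multiplicity $t-1$, and your orthogonality-plus-dimension count $3+(t-1)=t+2$ closes the argument; in this respect your version is tidier than the paper's. One small inaccuracy: your parenthetical claim that ``exactly one of them equals $0$'' fails for $t=2$, where the internal eigenvalue $-\frac{(t-2)d}{t(t-1)}$ is also $0$; this is harmless, since all the conclusion needs is that some non-Perron eigenvalue equals $0$ and none is positive.
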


\begin{proof}
Let $A$ be the adjacency matrix of $H_{d,1}$.
Then we have 

$$A=\begin{bmatrix}
0 & \frac{3d}{4} & \frac{d}{4} & 0 & 0 \\
\frac{3d}{4} & 0 & \frac{d}{4} & 0 & 0 \\
\frac{d}{4} & \frac{d}{4} & 0 & \frac{d}{4} &\frac{d}{4} \\
0 & 0 & \frac{d}{4} & 0 & \frac{3d}{4} \\
0 & 0& \frac{d}{4} & \frac{3d}{4} & 0
\end{bmatrix},$$

\noindent
and its characteristic polynomial is $(x-d)(x-\frac 34d)(x+\frac d4)(x+\frac{3d}{4})^2,$ which means $\lambda_2(H_{d,1})=\frac{3d}{4}$.

For $\lambda_2(H_{d,t})=0$, partition $V(H_{d,t})$ into two parts; two single vertices and $V(C_{d,t})$. The quotient matrix of $(H_{d,t})$ is 

$$Q=\begin{bmatrix}
0 & d \\
\frac{2d}{t} & \frac{(t-2)d}{t}
\end{bmatrix}.$$
The characteristic polynomial of the matrix is $p(x)=(x-d)(x+\frac{2d}{t})$. Since the vertex partition is equitable, $H_{d,t}$ has roots $d, -\frac{2d}t$ by the fact mentioned in the paragraph after Theorem~\ref{quot}.

Consider the subspace $W \subset \mathbb{R}^{t+2}$ of vectors which are constant on each part of the two part equitable partition. The lifted eigenvectors corresponding to the two roots of $p(x)$ form a basis for $W$. The remaining eigenvectors in a basis of eigenvectors for $H_t$ can be chosen to be perpendicular to the vectors in $W$. Thus, they may be chosen to be perpendicular to the characteristic vectors of the parts in the two part equitable partition since these characteristic vectors form a basis for $W$. This implies that these eigenvectors will correspond to the non-trivial eigenvalues of the graph obtained as a disjoint union of $2K_1$ and $C_t$. The corresponding eigenvalue will be  $-\frac{(t-2)d}{t(t-1)}$ with multiplicity $(t-1)$. These $(t-1)$ eigenvalues together with the three roots of $p(x)$ form the spectrum of the graph $H_t$. Thus $\lambda_2(H_{d,t})=0$.
\end{proof}

\section{Questions}
In this paper, for fixed positive integer $t$, we proved best upper bounds for $\lambda_2(G)$ in a $d$-regular multigraph $G$ with $\kappa(G) \ge t$. If we fix the number of vertices, then what can we say about an upper bound for $\lambda_2$? The bound should be expressed in terms of not only the degree, but also the number of vertices.

\begin{que}
For a positive integer $t$, what is the best upper bound for $\lambda_2(G)$ in an $n$-vertex $d$-regular multigraph $G$ with $x \ge t+1$, where $x=\kappa(G)$ or $\kappa'(G)$?
\end{que}

%It may be possible to prove that if $G$ is an $n$-vertex $d$-regular multigraph with $\mu_2(G) > \frac{n}{9n-25}d$, then $\kappa(G) \ge 2$. However, the bound might be improved.

We noted that a simple non-complete multigraph has the non-negative second largest eigenvalue. Since $\lambda_2(K_n)=-1$ for all $n \ge 2$, we have that
a simple graph $G$ has a negative $\lambda_2(G)$
if and only if $G$ is a complete graph with at least two vertices. More generally,
a simple graph $G$ has only one positive eigenvalue
if and only if $G$ is a complete multipartite simple graph (See~\cite{S}).
%where a complete multipartite multigraph is a multigraph whose vertices can be partitioned into sets such that two vertices are adjacent if and only if they are not in the same partite set

\begin{que}
What is a necessary and sufficient condition for a multigraph $G$ to have only one positive eigenvalue?
\end{que}

Note that the graph $H_{d,t}$ is a complete multipartite graph with $\lambda_2(H_{d,t})=0$. However, there are complete multipartite multigraphs with a positive second largest eigenvalue. For example, complete graphs with large multiplicity are such graphs. There are many graphs, which are not graphs obtained from a complete graph by duplicating edges. Here is one example. Let $F_4$ be the graph obtained from complete graph on 4 vertices by duplicating two non-incident edges $\frac d2 -2$ times. Note that each vertex in $H_4$ has degree $\frac d2$. Now, let $G_4$ be the graph obtained from $F_4$ and two vertices $x, y$ by adding $\frac d4$ edges between $x,y$
and each vertex in $F_4$. Then $G_4$ is a $d$-regular multigraph with eigenvalues $d, -4+\frac 12d, 0, (-\frac d2 +2)^{(2)}, -\frac d2$, where the exponent is the multiplicity. For $d \ge 8$, we have $\lambda_2(G_4)=-4+\frac 12d$. Note that $G_4$ is a complete multipartite multigraph with a positive second largest eigenvalue.
Thus we need a different family of graphs for such a condition.

\section*{Acknowledgment}
This work was supported by the National Research Foundation of Korea(KRF) grant funded by the Korea government(MSIP) (no. 2016R1A5A1008055).


\begin{thebibliography}{99}




\bibitem {BH} 
A. Brouwer and W. Haemers, {\em Spectra of Graphs}, 245pp book (2011), available at {\tt http://www.win.tue.nl/~aeb/2WF02/spectra.pdf}.

%\bibitem{BHT} S. Bobkov, C. Houdr{\'e}, P. Tetali,  $\lambda_{\infty}$, Vertex isoperimetry, and concentration, {\em Combinatorica}, {\bf 20} (2000), no. 2, 153--172.

\bibitem{C1}
S.L. Chandran, Minimum cuts, girth and spectral threshold, {\em Inform. Process. Lett.}, 
{\bf 89} (2004), no. 3, 105--110.

\bibitem{C2}
S.M. Cioab{\v a}, Eigenvalues and edge-connectivity of regular graphs. \emph{Linear Algebra Appl}, {\bf 432} (2010), 458--470.

\bibitem{D}
N.M.M. de Abreu, Old and new results on algebraic connectivity of graphs.
{\em Linear Algebra Appl}, {\bf 423} (2007), 53--73.

\bibitem{F}
M. Fiedler, Algebraic connectivity of graphs. \emph{Czech.Math.J}, {\bf 23} (1973),
298-305. 

\bibitem {GR} C. Godsil and G. Royle, {\em Algebraic Graph Theory}, Graduate Texts in Mathematics, 207. Springer-Verlag, New York, 2001.

%\bibitem{HLW} S. Hoory, N. Linial, A. Wigderson, Expander graphs and their applications, \emph{Bull. Amer. Math. Soc.} {\bf 43} (2006), 439--561.

\bibitem {KMNS} S. Kirkland, J.J. Molitierno, M. Neumann, and B.L. Shader, On graphs with equal algebraic and vertex connectivity,
\emph{Linear Algebra Appl}, {\bf 341} (2002), 45--56.

\bibitem{KS}
M. Krivelevich and B. Sudakov, Pseudo-random graphs, in {\emph More Sets,
Graphs and Numbers}, Bolyai Society Mathematical Studies {\bf 15} (2006), 199--262.

\bibitem{O1}
S. O, Matchings, Connectivity, and Eigenvalues in Regular graphs, Ph.D Dissertation (University of Illinois, 2011).

\bibitem{O2}
S. O, Edge-connectivity in regular multigraphs from eigenvalues, \emph{Linear Algebra and its Applications}, {\bf 491} (2016), 4--14

\bibitem{O3}
S. M. Cioab{\v a}, S. O, Edge-connectivity, matchings, and eigenvalues in regular graphs, \emph{SIAM J. Discrete Math.}, {\bf 99} (2010), 1470-1481.

\bibitem{S}
J. H. Smith, Some properties of the spectrum of a graph, \emph{Combinatorial Structures and their Applications}, Gordon and Breach, New York. 1970. pp. 403--406.

\bibitem{W} D.B. West, {\it Introduction to Graph Theory}, Prentice Hall, Inc., Upper Saddle River, NJ, 2001


\end{thebibliography}
\end{document}